\newcommand{\badrounding}[1]{}
\tikzset{vtx/.style={inner sep=1.7pt, outer sep=0pt, circle, fill,draw}}
\tikzset{vtxB/.style={inner sep=1.7pt, outer sep=0pt, circle, fill=white,draw}}
\tikzset{vtx/.style={inner sep=1.7pt, outer sep=0pt, circle, fill}} 
\tikzset{unlabeled_vertex/.style={inner sep=1.7pt, outer sep=0pt, circle, fill}} 
\tikzset{labeled_vertex/.style={inner sep=2.2pt, outer sep=0pt, rectangle, fill=yellow, draw=black}} 
\tikzset{edge_color0/.style={color=black,line width=1.2pt}} 
\tikzset{edge_color1/.style={color=red,  line width=1.2pt,opacity=0}} 
\tikzset{edge_color2/.style={color=blue, line width=1.2pt,opacity=1}} 
\tikzset{edge_color2d/.style={color=blue, line width=1.2pt,opacity=1,dashed}} 
\tikzset{edge_color3/.style={color=blue, line width=1.2pt,opacity=1,dashed}} 
\tikzset{edge_color4/.style={color=red,  line width=1.2pt,dotted}} 
\tikzset{edge_color5/.style={color=blue, line width=1.2pt,dotted}} 
\tikzset{edge_color6/.style={color=green, line width=1.2pt,dotted}} 
\tikzset{vertex_color1/.style={inner sep=1.7pt, outer sep=0pt, draw, circle, fill=red}} 
\tikzset{vertex_color2/.style={inner sep=1.7pt, outer sep=0pt, draw, circle, fill=blue}} 
\tikzset{vertex_color3/.style={inner sep=1.7pt, outer sep=0pt, draw, circle, fill=green}}
\tikzset{vertex_u/.style={inner sep=1.7pt, outer sep=0pt, circle, fill}} 
\tikzset{vertex_l/.style={inner sep=2.2pt, outer sep=0pt, rectangle,fill=yellow, draw=black}}
\newtheorem{theo}{Theorem}
\newtheorem{lemma}[theo]{Lemma}
\newtheorem{corl}[theo]{Corollary}
\newtheorem{conj}[theo]{Conjecture}
\theoremstyle{definition}
\numberwithin{theo}{section}
\author{%
J\'ozsef Balogh\footnote{Department of Mathematics, University of Illinois at Urbana-Champaign, Urbana, Illinois 61801, USA. E-mail: \texttt{jobal@illinois.edu}. Research is partially supported by NSF Grants DMS-1764123, RTG DMS 1937241 and FRG DMS-2152488, Arnold O. Beckman Research Award (UIUC Campus Research Board RB 22000), the Langan Scholar Fund (UIUC), and the Simons Fellowship.} 
\and Felix Christian Clemen \footnote {Department of Mathematics, University of Illinois at Urbana-Champaign, Urbana, Illinois 61801, USA, E-mail: \texttt{fclemen2@illinois.edu}.}
 \and Bernard Lidick\'{y} \footnote {Iowa State University, Department of Mathematics, Iowa State University, Ames, IA., E-mail: \texttt{lidicky@} \texttt{iastate.edu}. Research of this author is partially supported by NSF grants DMS-1855653 and FRG DMS-2152498 and Scott Hanna fellowship.}
\and
Sergey Norin\footnote{Department of Mathematics and Statistics, McGill University, Montreal, QC, Canada.
\texttt{sergey.norin@mcgill.ca}. Research of this author was supported by an NSERC Discovery Grant.}
\and 
Jan Volec\footnote{
Department of Mathematics, Czech Technical University in Prague, Prague, Czech Republic.
\texttt{jan@ucw.cz}. Research of this author is partially supported by Czech Science Foundation GA\v{C}R, grant 23-06815M.}
}
\newcommand{\vc}[2][0.34]{\ensuremath{\vcenter{\hbox{\includegraphics[scale=#1,page=#2]{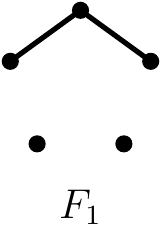}}}}}
\newcommand{\cF}{\mathcal{F}}
\def\flPath{{\vc{38}}}
\def\flChry{{\vc{15}}}
\def\flRcochr{{\vc{17}}}
\def\flRedge{{\vc{18}}}
\def\flRpath{{\vc{19}}}
\def\flRchry{{\vc{20}}}
\def\flRNEall{{\vc{23}}} \def\flRNEone{{\vc{22}}} \def\flRNEtwo{{\vc{21}}}
\def\flREnone{{\vc{39}}} \def\flREone{{\vc{41}}} \def\flREtwo{{\vc{40}}}
\def\flRCOCHone{{\vc{31}}} \def\flRCOCHtwo{{\vc{32}}}
\def\flRCOCHthree{{\vc{33}}}
\def\flRCOCHfour{{\vc{34}}} \def\flRCOCHfive{{\vc{35}}}
\def\flREMPToneA{{\vc{24}}} \def\flREMPToneB{{\vc{25}}} \def\flREMPToneC{{\vc{26}}}
\def\flREMPTtwoA{{\vc{27}}} \def\flREMPTtwoB{{\vc{28}}} \def\flREMPTtwoC{{\vc{29}}}
\def\flREMPTthree{{\vc{30}}}
\DeclareMathOperator{\Aut}{Aut}
\newcommand{\eps}{\varepsilon}
\newcommand{\sh}{\operatorname{hom}_s}
\def\brandtconst{{0.15442}}
\def\nikifconst{{0.15467}}
\def\janconst{{\frac{15}{94}}}
\title{The Spectrum of  Triangle-free Graphs}
\begin{document}
\maketitle 
\begin{abstract} Denote by $q_n(G)$ the smallest eigenvalue of the signless Laplacian matrix of an $n$-vertex graph $G$.
Brandt conjectured in 1997 that for regular triangle-free graphs $q_n(G) \leq \frac{4n}{25}$.
We prove a stronger result: If $G$ is a triangle-free graph then $q_n(G) \leq \frac{15n}{94}< \frac{4n}{25}$.
Brandt's conjecture is a subproblem of two famous conjectures of Erd\H{o}s:

    (1) Sparse-Half-Conjecture: Every  $n$-vertex triangle-free graph has a subset of vertices of size $\lceil\frac{n}{2}\rceil$ spanning at most $n^2/50$ edges. 
    
    (2) Every $n$-vertex triangle-free graph can be made bipartite by removing at most $n^2/25$ edges. 
    
In our proof we use linear algebraic methods to upper bound $q_n(G)$ by the ratio between the number of induced paths with 3 and 4 vertices. We give an upper bound on this ratio via the method of flag algebras. 
    \end{abstract}
    \section{Introduction}
We prove a result on eigenvalues of triangle-free graphs which is motivated by the following two famous conjectures of Erd\H{o}s.
\begin{conj}[Erd\H{o}s' \emph{Sparse Half Conjecture}~\cite{MR0409246,MR1439273}]
\label{sparsehalf}
Every triangle-free graph on $n$ vertices has a subset of vertices of size $\lceil\frac{n}{2}\rceil$ vertices spanning at most $n^2/50$ edges.
\end{conj}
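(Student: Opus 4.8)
\medskip
\noindent\textbf{Proof proposal.}
The plan is to treat Conjecture~\ref{sparsehalf} as an extremal problem that is tight against a single family — the balanced blow-up $C_5[n/5]$ of the pentagon — and to split the argument into a flag-algebra/semidefinite phase handling every triangle-free graph far from $C_5[n/5]$ and a stability phase handling those close to it. First I would record the extremal configuration: in $C_5[n/5]$, one colour class taken in full, a second full colour class at distance two from it, and exactly one half of a third class form a set of $\lceil n/2\rceil$ vertices inducing $(n/5)^2/2 = n^2/50$ edges. An elementary optimization — writing $s_1,\dots,s_5$ for the class sizes of a half-set and minimizing $\sum_{i\in\mathbb Z_5}s_is_{i+1}$ subject to $0\le s_i\le n/5$ and $\sum_i s_i=\lceil n/2\rceil$ — confirms the minimum equals $n^2/50$, attained at $(s_1,\dots,s_5)=(n/5,0,n/5,n/10,0)$ up to the dihedral symmetry of $C_5$, so $1/50$ is the true constant and any proof must be exactly tight here.

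For a triangle-free $G$ not close to $C_5[n/5]$ I would bound the sparsest half by the method of flag algebras, in the spirit of the induced-$P_3$/induced-$P_4$ estimate used later in this paper. The genuine obstruction is that $\min_{|S|=\lceil n/2\rceil}e(S)$ is a minimum, while flag algebras control averages. The workaround is to fix, in every sufficiently large triangle-free graph, a \emph{canonical} half-set $S$ — say the larger side of a maximum cut, or a level set of a suitable eigenvector — and to bound $e(S)$ in terms of the densities of triangle-free graphs on a bounded number of vertices, carrying the partition $(S,\overline S)$ as an auxiliary flag type. The certificate wanted from the semidefinite program is the dichotomy: for every triangle-free $G$, either $e(S)\le(1/50+o(1))n^2$ for the canonical $S$, or $G$ is within $o(n^2)$ edge edits of $C_5[n/5]$.

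The stability phase starts from that dichotomy: one shows that a triangle-free $G$ whose sparsest half has at least $(1/50-o(1))n^2$ edges lies within $o(n^2)$ edits of $C_5[n/5]$, and then finishes by the elementary optimization above, absorbing the $o(n^2)$ edits into the gap between the generic value of $\sum_i s_is_{i+1}$ and $n^2/50$. Because the extremal object is a blow-up rather than a fixed graph, this is heavier than usual: near-extremal graphs form a continuum of class-size profiles, and sporadic competitors such as the Petersen graph and its blow-ups must be ruled out at small orders.

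The hard part — and the reason the conjecture has resisted for half a century — is the flag-algebra phase: current semidefinite relaxations of the sparsest-half problem stall well above the target, which in density terms is $\tfrac{n^2/50}{\binom{n/2}{2}}\to\tfrac{4}{25}$ inside a half, and any \emph{symmetric} choice of $S$ is lossy by nearly a factor of two. The spectral quantity $q_n(G)$ studied in the rest of this paper only \emph{lower}-bounds the relevant quantities — one has $\tfrac{n}{4}q_n(G)\le e(S)+e(\overline S)$ for every $S$, so $q_n(G)$ is a lower bound on the bipartite edit distance of $G$ — which is why an upper bound such as the $q_n(G)\le 15n/94$ proved here is a necessary condition for, and supporting evidence towards, the Erd\H{o}s conjectures (it implies Brandt's conjecture) but does not by itself exhibit a sparse half. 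A proof must instead \emph{upper}-bound $\min_S e(S)$. I expect this forces using the asymmetry in earnest — choosing $S$ adaptively to the structure of $G$, since one needs only \emph{one} of $e(S)$, $e(\overline S)$ to be small — combined with a substantially larger flag-algebra computation than the one used here to settle Brandt's conjecture.
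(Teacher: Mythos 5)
The statement you were asked about is Conjecture~\ref{sparsehalf}, which is an \emph{open problem}: the paper states it, cites it as motivation, and then proves the strictly weaker Theorem~\ref{Nikifbound} (an upper bound on $q_n(G)$ that settles Brandt's Conjecture~\ref{brandtconj}). There is no proof of Conjecture~\ref{sparsehalf} in the paper for your proposal to be compared against, and you correctly recognize this: your text is a research sketch rather than a proof, and it explicitly concedes that the flag-algebra phase ``stalls well above the target.'' That concession is the right call — you have not proved the conjecture, and neither does the paper.

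On the substance of your sketch, the preliminary observations are all sound. The identification of the balanced $C_5$ blow-up as the extremal configuration is correct, and your optimization confirming that $\min \sum_i s_is_{i+1}$ over $0\le s_i\le n/5$, $\sum_i s_i=\lceil n/2\rceil$ equals $n^2/50$ at $(n/5,0,n/5,n/10,0)$ up to $C_5$-symmetry checks out. You also correctly pin down the logical relationship between the paper's result and the Erd\H{o}s conjectures: Theorem~\ref{QD2} gives $\tfrac{n}{4}q_n(G)\le D_2(G)=\min_S\bigl(e(S)+e(\overline S)\bigr)$, so an upper bound on $q_n(G)$ is a \emph{consequence} of Conjecture~\ref{makingbipartite} (a necessary condition for it to hold) rather than a step towards proving it, and by itself never exhibits a sparse half-set. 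The diagnosis that flag algebras control averages while the conjecture demands a minimum, so that one must fix an adaptive $S$ and accept a stability dichotomy, matches the accepted picture of why this problem is hard — Razborov's cited result for $(27/1024)n^2$ is exactly of the flavor you describe, and still far from $n^2/50$. The one step you should not present as routine is the stability phase: because the extremum is a blow-up family rather than a single graph, and because sporadic competitors (Higman--Sims, Petersen, etc.) live nearby, ``absorbing the $o(n^2)$ edits'' is not an elementary bookkeeping step but a genuine open difficulty. In short: your proposal is an honest and largely accurate assessment of the problem, not a proof, and the paper offers no proof either.
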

Erd\H{o}s offered a \$250 reward for proving this conjecture. There has been progress on this conjecture in various directions~\cite{MR1273598,MR2232396,sparsehalfraz,MR3383248,SchachtErdos}. Most recently, Razborov~\cite{sparsehalfraz} proved that every triangle-free graph on $n$ vertices has an induced subgraph on $n/2$ vertices with at most $(27/1024)n^2$ edges. 

For a graph $G$, denote by $D_2(G)$ the minimum number of edges which have to be removed to make $G$ bipartite.
\begin{conj}[Erd\H{o}s~\cite{MR0409246}]
\label{makingbipartite}
Let $G$ be a triangle-free graph on $n$ vertices. Then $D_2(G)\leq n^2/25$.
\end{conj}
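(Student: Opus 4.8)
\medskip
\noindent\textbf{A possible approach.} Conjecture~\ref{makingbipartite} is a long-standing open problem, so what follows is a plausible line of attack rather than a proof, deliberately built around the tools of this paper. Write $e(G)$ for the number of edges of $G$ and $\mathrm{mc}(G)$ for the maximum size of a cut, so that $D_2(G)=e(G)-\mathrm{mc}(G)$; the structural feature to exploit is that $D_2$ is monotone under passing to subgraphs, and hence under graph homomorphisms: if $G\to H$, then $G$ embeds into a blow-up $B$ of $H$ on $|V(G)|$ vertices and $D_2(G)\le D_2(B)$. The first, trivial, reduction is that if $e(G)\le n^2/25$ then $D_2(G)\le e(G)\le n^2/25$, so we may assume $G$ is dense. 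The target is tight, being attained by the balanced blow-up of $C_5$; so the real aim is to prove that this blow-up is the \emph{unique} extremal graph.

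In the dense regime the plan is to invoke the stability theory of triangle-free graphs. If $\delta(G)>2n/5$ then $G$ is bipartite by Andr\'asfai--Erd\H{o}s--S\'os and $D_2(G)=0$; in the next threshold interval $G$ is homomorphic to $C_5$, so $D_2(G)\le D_2(B)$ for an $n$-vertex blow-up $B$ of $C_5$, and a short calculation shows that $D_2$ of a blow-up of $C_5$ with part sizes $n_1,\dots,n_5$ equals $\min_i n_i n_{i+1}$ (indices mod $5$), which is at most $(n/5)^2=n^2/25$ by AM--GM, with equality only in the balanced case. Descending through the Andr\'asfai--H\"aggkvist--Jin--Brandt--Thomass\'e hierarchy of minimum-degree thresholds, a triangle-free graph of large minimum degree is homomorphic to one of an explicit family of small triangle-free graphs (the Andr\'asfai graphs, generalized Mycielskians of $C_5$, the Petersen graph, and a few other sporadic graphs), and for each such model $H$ one is left with the finite optimization $\max\{\,D_2(B):B\text{ an $n$-vertex blow-up of }H\,\}\le n^2/25$, with equality only for the balanced $C_5$. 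Rewriting $D_2(B)$ as the minimum, over the boundedly many $2$-colourings of $V(H)$, of a quadratic form in the blow-up sizes, this is exactly the kind of finite verification that the flag-algebra / semidefinite method employed in this paper for the $P_3$/$P_4$ ratio is designed to certify.

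The genuine difficulty — and the reason the conjecture is still open — lies where this sketch is weakest. First, $D_2(G)=e(G)-\mathrm{mc}(G)$ is a \emph{global} invariant: a maximum cut is not a density of bounded-size subgraphs, so flag algebras cannot bound $D_2$ directly, and, since the target is attained, the structural reduction must be carried out exactly, with none of the usual $o(n^2)$ slack. Second, and more seriously, the available structure theorems control only triangle-free graphs of \emph{large} minimum degree; a graph with many edges can still carry a linear set of low-degree vertices (harmless in itself, since deleting them removes few edges) yet fail to be homomorphic to any small graph, and the \emph{middle} range — minimum degree well below the Andr\'asfai thresholds but edge count far above $n^2/26$ — is precisely where a counterexample, if one existed, would live. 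Bridging that range, presumably by iteratively peeling off sparse pieces and then applying the exact blow-up analysis to a dense core of large minimum degree, is the main obstacle. Finally, it is worth recording what \emph{is} within reach: for $d$-regular $G$ the spectral bound on max-cut gives $D_2(G)\ge \frac{n}{4}\,q_n(G)$, so Conjecture~\ref{makingbipartite} would force $q_n(G)\le 4n/25$ for regular triangle-free graphs; the bound $q_n(G)\le 15n/94$ established in this paper confirms this necessary spectral consequence — for all triangle-free graphs, not merely regular ones — and thereby removes one a priori obstruction to the conjecture.
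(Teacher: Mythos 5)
This statement is Erd\H{o}s' conjecture on making triangle-free graphs bipartite; the paper states it \emph{as a conjecture}, offers no proof, and indeed it remains open. So there is no ``paper's own proof'' to compare against, and your proposal --- which you candidly frame as a line of attack rather than a proof --- should be judged as such. Your closing observation is correct and is essentially the paper's actual content: via Theorem~\ref{QD2}, the conjecture implies $q_n(G)\le 4n/25$, and the paper verifies this consequence (with room to spare, $15/94<4/25$) for all triangle-free graphs. Your reduction in the dense, large-minimum-degree regime is also sound in outline: $\delta(G)>2n/5$ gives bipartiteness by Andr\'asfai--Erd\H{o}s--S\'os, $\delta(G)>n/3$ gives a homomorphism to a bounded triangle-free graph by Brandt--Thomass\'e / \L{}uczak, and the blow-up computation $D_2(B)\le\min_i n_in_{i+1}\le(n/5)^2$ for $C_5$-blow-ups is correct (the geometric mean of the five consecutive products is $(\prod n_i)^{2/5}\le(n/5)^2$).

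The genuine gap is the one you name yourself, and it is fatal to this route as it stands. The minimum-degree hierarchy stops at $n/3$: below that threshold there exist triangle-free graphs of unbounded chromatic number, hence not homomorphic to any bounded triangle-free graph, so no finite blow-up verification can cover them. Moreover the proposed fix --- ``iteratively peeling off sparse pieces'' --- does not close the accounting. Deleting a vertex $v$ and placing it optimally in a bipartition of $G-v$ costs up to $\deg(v)/2$ extra removed edges, so peeling a linear number of vertices of degree $cn$ contributes $\Theta(n^2)$ to $D_2$, while simultaneously shrinking the $n$ in the target bound $n^2/25$; since the conjectured bound is exactly attained by the balanced $C_5$-blow-up, there is no $o(n^2)$ slack to absorb this. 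The same exactness is why the flag-algebra machinery of this paper cannot be transplanted: $D_2(G)=e(G)-\mathrm{mc}(G)$ involves the max-cut, which is not a bounded-size subgraph density, whereas the paper succeeds precisely because it replaces $D_2$ by the spectral quantity $q_n(G)$, bounds that by the local ratio $\sh(P_4,G)/\sh(P_3,G)$ (Lemma~\ref{l:rayleigh} and Corollary~\ref{c:qn}), and only then invokes flag algebras --- and even there the method provably cannot reach $4/25$, bottoming out at $42/275$ because of the Higman--Sims graph.
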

There also has been work on this conjecture~\cite{Howtotriangle,Howmany,flagbipartite,Alo96,She92}, most recently, Balogh, Clemen and Lidick\'y~\cite{flagbipartite} proved $D_2(G)\leq n^2/23.5$.

Brandt~\cite{MR1606776} found a surprising connection between these two conjectures and the eigenvalues of triangle-free graphs. Denote by $\lambda_n(G)\leq \ldots \leq \lambda_1(G)$ the eigenvalues of the adjacency matrix of an $n$-vertex graph $G$. Brandt~\cite{MR1606776} proved that 
\begin{align}
\label{D2GBrandt}
    D_2(G)\geq \frac{\lambda_1(G)+\lambda_n(G)}{4}\cdot n
\end{align}
for regular graphs and conjectured the following.
\begin{conj}[Brandt~\cite{MR1606776}]
\label{brandtconj}
Let $G$ be a triangle-free regular $n$-vertex graph. Then
\[\lambda_1(G)+\lambda_n(G)\leq \frac{4}{25} \cdot n.\]
\end{conj}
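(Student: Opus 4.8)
The plan is to prove the stronger statement announced in the abstract --- every triangle-free graph $G$ on $n$ vertices satisfies $q_n(G)\le\frac{15n}{94}$ --- and to derive Brandt's conjecture from it. Indeed, if $G$ is $d$-regular then its signless Laplacian is $Q(G)=D(G)+A(G)=dI+A(G)$, so $q_n(G)=d+\lambda_n(G)=\lambda_1(G)+\lambda_n(G)$, and since $\frac{15}{94}<\frac{4}{25}$ the stronger bound gives $\lambda_1(G)+\lambda_n(G)<\frac{4n}{25}$. Throughout I use the Rayleigh characterization $q_n(G)=\min_{x\ne 0}\frac{\sum_{uv\in E(G)}(x_u+x_v)^2}{\sum_{v}x_v^2}$, recalling that $x^\top Q(G)x=\sum_{uv\in E(G)}(x_u+x_v)^2$.

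\textbf{Step 1 (linear algebra): reduction to a ratio of induced path counts.} For each edge $uv\in E(G)$ I would take the test vector $x^{uv}\in\mathbb R^{V(G)}$ with $x^{uv}_z=\mathbf 1[z\in N(u)]-\mathbf 1[z\in N(v)]$. Since $uv$ is an edge of a triangle-free graph, $N(u)\cap N(v)=\emptyset$ and $G$ has no edge inside $N(u)$ nor inside $N(v)$; a short computation then gives $\|x^{uv}\|^2=d(u)+d(v)$ and $(x^{uv})^\top Q(G)\,x^{uv}=e_G(S,V(G)\setminus S)$ with $S=N(u)\cup N(v)$, and one further checks that $e_G(S,V(G)\setminus S)$ equals the number of injective homomorphisms of the four-vertex path $P_4$ into $G$ that map the first edge of $P_4$ onto $uv$. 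Summing these identities over all edges --- equivalently, applying $q_n(G)\operatorname{tr}(M)\le\operatorname{tr}(Q(G)M)$ to the positive semidefinite matrix $M=\sum_{uv\in E(G)}x^{uv}(x^{uv})^\top$, or just using that a minimum of ratios $a_i/b_i$ with $b_i>0$ is at most $(\sum_i a_i)/(\sum_i b_i)$ --- yields
\[q_n(G)\ \le\ \frac{\sum_{uv\in E(G)}(x^{uv})^\top Q(G)\,x^{uv}}{\sum_{uv\in E(G)}\|x^{uv}\|^2}\ =\ \frac{2N_4(G)}{\sum_{v}d(v)^2},\]
where $N_4(G)$ is the number of induced copies of $P_4$ in $G$. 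As every $3$-vertex and $4$-vertex path in a triangle-free graph is induced, $\sum_{v}d(v)^2=2N_3(G)+2|E(G)|$, where $N_3(G)$ counts induced $3$-vertex paths, so the bound reads $q_n(G)\le\frac{N_4(G)}{N_3(G)+|E(G)|}\le\frac{N_4(G)}{N_3(G)}$.

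\textbf{Step 2 (flag algebras): bounding the ratio.} Expressing these through induced densities, $N_3(G)=\binom n3 p(P_3;G)$ and $N_4(G)=\binom n4 p(P_4;G)$, Step~1 reduces the theorem to the inequality $p(P_4;G)\le\frac{30}{47}p(P_3;G)$ for every triangle-free $G$: then $\frac{N_4(G)}{N_3(G)}=\frac{n-3}{4}\cdot\frac{p(P_4;G)}{p(P_3;G)}\le\frac{15(n-3)}{94}<\frac{15n}{94}$. Equivalently, $\frac{30}{47}\llbracket P_3\rrbracket-\llbracket P_4\rrbracket$ must be shown nonnegative in the flag algebra of triangle-free graphs, and I would do this by the semidefinite (Cauchy--Schwarz) method: fix a working order --- triangle-free graphs on five or six vertices --- expand $\frac{30}{47}p(P_3)-p(P_4)$ on this basis, and search for positive semidefinite matrices $Q_\sigma$ indexed by the relevant types $\sigma$ (together with nonnegative coefficients on individual densities) whose associated sum of squares reproduces the expansion; a numerically computed SDP solution is then rounded to an exact rational certificate. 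Because the theorem is stated for all finite $n$ whereas this argument is intrinsically asymptotic, one further needs to bridge the gap: either carry out the SDP so that the $O(1/n)$ flag-algebra error terms are tracked, or check that those corrections are absorbed by the slack already present in Step~1 --- the discarded $|E(G)|$ in the denominator and the positive gap $\frac{30n}{47(n-3)}-\frac{30}{47}$ --- while handling a bounded number of small $n$ directly.

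\textbf{The main obstacle.} Step~1 is elementary bookkeeping; all the weight rests on Step~2. Brandt's conjecture would already follow from the weaker density bound $p(P_4;G)\le\frac{16}{25}p(P_3;G)$, so the real task is to sharpen the ratio strictly below $\frac{16}{25}$, down to $\frac{30}{47}$. This demands choosing the right family of admissible small graphs and types so that the semidefinite program is feasible at that constant, and an exact rounding of the numerical optimum that preserves feasibility. Making the passage from the limiting (graphon) inequality to the stated bound for all finite $n$ fully rigorous is a further, more technical, point.
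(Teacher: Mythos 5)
Your proposal follows the paper's proof essentially verbatim. Step~1 is exactly the paper's Lemma~\ref{l:rayleigh} and Corollary~\ref{c:qn}: the same test vector $z_v=\mathbf{1}[v\in N(x)]-\mathbf{1}[v\in N(y)]$, the same averaging of Rayleigh quotients over edges, and the same identification of numerator and denominator with the (strong-homomorphism) counts of $P_4$ and $P_3$; your target inequality $p(P_4;G)\le\frac{30}{47}\,p(P_3;G)$ is precisely the paper's inequality~\eqref{eq:flag}, and the reduction from Brandt's conjecture via $q_n(G)=d+\lambda_n(G)$ is the same. What you leave open is the one piece that carries all the difficulty: the explicit sum-of-squares certificate for~\eqref{eq:flag} (the paper exhibits six explicit squared flag expressions together with a rational dual vector, verified by an ancillary script), so as written your Step~2 is a plan rather than a proof; also, for the passage from the asymptotic flag inequality to finite $n$, the paper uses the clean blow-up argument ($\sh(P_k,G^{(b)})=b^k\sh(P_k,G)$, so a finite counterexample with positive slack survives in the limit) rather than tracking $O(1/n)$ error terms or absorbing them into the slack from discarding $|E(G)|$, which as you present it is not quantitatively justified.
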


Towards this conjecture, Brandt~\cite{MR1606776} proved a bound $\lambda_1(G)+\lambda_n(G)\leq (3-2\sqrt{2})n\approx 0.1715n$ for regular triangle-free graphs, which was very recently shown to hold also in the non-regular setting by Csikvári~\cite{arxivCsi22}.
Brandt also noted that $\lambda_1(G_{HS})+\lambda_n(G_{HS})=0.14n$ for the so-called Higman-Sims graph $G_{HS}$, which is the unique strongly regular graph with parameters $(n,d,t,k) =(100,22,0,6)$.
Recall that an $(n,d,t,k)$-\emph{strongly regular graph} is an $n$-vertex $d$-regular graph, where the number of common neighbors of every pair of adjacent vertices is $t$ and the number of common neighbors of a non-adjacent pair of vertices is $k$.

The value $4/25$ is motivated by the fact that if either of Conjectures~\ref{sparsehalf} or \ref{makingbipartite} were true, it would imply Conjecture~\ref{brandtconj}. As observed by Brandt~\cite{MR1606776}, Conjecture~\ref{sparsehalf} implies Conjecture~\ref{brandtconj} by applying the following version of the Expander Mixing Lemma for a set $S\subset V(G)$ of size $n/2$ with $e(S)\leq n^2/50$. 
\begin{lemma}[Bussemaker-Cvetkovi\'c-Seidel~\cite{MR519264}, Alon-Chung~\cite{MR975519}]
Let $G$ be an $n$-vertex $d$-regular graph. Then, for every $S\subseteq V(G)$, we have
\begin{align*}
    e(S)\geq |S|\cdot\frac{|S|d+(n-|S|)\lambda_n(G)}{2n}.
\end{align*}
\end{lemma}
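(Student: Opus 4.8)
The plan is to run the standard quadratic‑form argument for the adjacency matrix. Let $A$ be the adjacency matrix of $G$, let $\mathbf{1}\in\mathbb{R}^n$ be the all‑ones vector, and for $S\subseteq V(G)$ let $x\in\mathbb{R}^n$ be the characteristic vector of $S$, i.e.\ $x_i=1$ if $i\in S$ and $x_i=0$ otherwise. Since $G$ is simple, $x^\top A x=\sum_{i,j\in S}A_{ij}=2e(S)$, so the entire task reduces to bounding the quadratic form $x^\top A x$ from below. The key structural input is $d$-regularity: it guarantees that $\mathbf{1}$ is an eigenvector of $A$ with eigenvalue $d$ (which is in fact $\lambda_1(G)$), and this is exactly what makes the cross terms in the decomposition below disappear.

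First I would split $x$ into its component along $\mathbf{1}$ and the orthogonal remainder. Because $\langle x,\mathbf{1}\rangle=|S|$, we can write $x=\tfrac{|S|}{n}\mathbf{1}+y$ with $y\perp\mathbf{1}$. Expanding,
\[
x^\top A x=\frac{|S|^2}{n^2}\,\mathbf{1}^\top A\mathbf{1}+\frac{2|S|}{n}\,\mathbf{1}^\top A y+y^\top A y
=\frac{|S|^2 d}{n}+y^\top A y,
\]
where the middle term vanishes since $\mathbf{1}^\top A y=d\,\mathbf{1}^\top y=0$ and $\mathbf{1}^\top A\mathbf{1}=dn$. For the last term I would use nothing more than the fact that $\lambda_n(G)$ is the smallest eigenvalue of the symmetric matrix $A$, which gives the Rayleigh‑quotient bound $y^\top A y\ge \lambda_n(G)\,\|y\|^2$ for every vector $y$; no control on the spectral gap is needed. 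Finally, $\|y\|^2=\|x\|^2-\tfrac{|S|^2}{n}=|S|-\tfrac{|S|^2}{n}=\tfrac{|S|(n-|S|)}{n}$, so
\[
2e(S)=x^\top A x\ge \frac{|S|^2 d}{n}+\lambda_n(G)\,\frac{|S|(n-|S|)}{n}
=\frac{|S|}{n}\bigl(|S|d+(n-|S|)\lambda_n(G)\bigr),
\]
and dividing by $2$ yields the claimed inequality.

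I do not expect a genuine obstacle here; the statement is a clean consequence of the spectral decomposition. The only places warranting care are (i) confirming that $\mathbf{1}$ really is an eigenvector, which is where $d$-regularity is used and without which the term $\tfrac{2|S|}{n}\mathbf{1}^\top A y$ would not cancel, and (ii) noting that the bound on $y^\top A y$ uses only $\lambda_n(G)\le$ every eigenvalue, so the argument is insensitive to whether $y$ avoids further eigenspaces. One can also record that the inequality is tight: trivially for $S=\emptyset$ or $S=V(G)$, and more substantively whenever $x-\tfrac{|S|}{n}\mathbf{1}$ is proportional to an eigenvector for $\lambda_n(G)$, which is the situation relevant to applying the lemma with the extremal sets arising from Conjecture~\ref{sparsehalf}.
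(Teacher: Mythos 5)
Your proof is correct. Note that the paper does not prove this lemma at all --- it is quoted from Bussemaker--Cvetkovi\'c--Seidel and Alon--Chung as a known result --- so there is no internal proof to compare against; your argument is the standard one (decompose the characteristic vector along $\mathbf{1}$ and its orthogonal complement, use regularity to kill the cross term, and apply the Rayleigh bound to the remainder), and every step checks out.
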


Given a graph $G$, denote by $Q=A+D$ the \emph{signless Laplacian matrix} of $G$, where $D$ is the diagonal matrix of the degrees of $G$ and $A$ is the adjacency matrix of $G$. Let $q_n(G)\leq\ldots\leq q_1(G)$ be the eigenvalues of $Q$. By considering the signless Laplacian matrix, De Lima, Nikiforov and Olivera~\cite{MR3477106} extended \eqref{D2GBrandt} beyond regular graphs as follows.

\begin{theo}[De Lima, Nikiforov and Olivera\cite{MR3477106}]\label{QD2}
	For every $n$-vertex graph $G$ we have $$D_2(G) \geq \frac{q_n(G)}{4}\cdot n.$$
\end{theo}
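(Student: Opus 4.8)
The plan is to exhibit one explicit unit vector whose Rayleigh quotient with respect to $Q$ is at most $\tfrac{4}{n}D_2(G)$; since $q_n(G)$ is the minimum of $x^{\top}Qx$ over all unit vectors $x$, this immediately yields the claimed inequality. The only structural input needed is the standard quadratic-form identity for the signless Laplacian.

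First I would record that identity: for every $x\in\mathbb{R}^{V(G)}$,
\[
x^{\top}Qx \;=\; \sum_{uv\in E(G)}(x_u+x_v)^2,
\]
which follows by writing $x^{\top}(A+D)x = \sum_{uv\in E(G)} 2x_ux_v + \sum_{v}d_v x_v^2 = \sum_{uv\in E(G)}\bigl(x_u^2+2x_ux_v+x_v^2\bigr)$.

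Next, let $F\subseteq E(G)$ be a minimum-size set of edges with $G-F$ bipartite, so $|F|=D_2(G)$, and fix a bipartition $V(G)=X\cup Y$ of $G-F$. Define the test vector $x$ by $x_v=1/\sqrt{n}$ for $v\in X$ and $x_v=-1/\sqrt{n}$ for $v\in Y$; then $\|x\|^2=1$. For an edge $uv$ joining the two parts we have $x_u+x_v=0$, and for an edge with both endpoints in the same part we have $(x_u+x_v)^2=4/n$. Every edge of this latter ``monochromatic'' type lies in $F$, since $G-F$ has no edge inside $X$ or inside $Y$. Therefore
\[
q_n(G)\;\le\; x^{\top}Qx \;=\!\!\sum_{\substack{uv\in E(G):\\ u,v\text{ in the same part}}}\!\!\frac{4}{n}\;\le\;\frac{4|F|}{n}\;=\;\frac{4\,D_2(G)}{n},
\]
which rearranges to $D_2(G)\ge \tfrac{q_n(G)}{4}\cdot n$.

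I do not expect any genuine obstacle here: the argument is a one-line Rayleigh-quotient estimate, and the only points to verify are that the signed indicator vector is truly a unit vector (it is, using all $n$ coordinates) and that the monochromatic edges are all accounted for inside $F$ (they are, by minimality/definition of $F$). The content of the statement is really the observation that the $\pm1$ bipartition vector of an \emph{optimal} bipartition is the correct test vector for the signless Laplacian — this is precisely what makes $q_n(G)$, rather than $\lambda_1(G)+\lambda_n(G)$, the relevant spectral quantity and what removes the regularity hypothesis present in \eqref{D2GBrandt}.
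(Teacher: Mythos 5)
Your proof is correct, and it is the standard argument for this result: the identity $x^{\top}Qx=\sum_{uv\in E(G)}(x_u+x_v)^2$ together with the $\pm 1/\sqrt{n}$ indicator of an optimal bipartition as a Rayleigh test vector. The paper itself states Theorem~\ref{QD2} as a cited result of De Lima, Nikiforov and Olivera without reproducing a proof, so there is nothing to contrast with; your argument is complete and matches the original.
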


By Theorem~\ref{QD2}, if Conjecture~\ref{makingbipartite} holds then  $q_n(G) \leq \frac{4n}{25}$ for every triangle-free $n$-vertex graph $G$. Motivated by this observation De Lima, Nikiforov and Olivera~\cite{MR3477106} proposed investigating upper bounds on $q_n(G)$, and proved $q_n(G) \leq \frac{2n}{9}$ for $n$-vertex triangle-free graphs $G$.  
Our main result is an improvement of this bound, which solves Conjecture~\ref{brandtconj}.
\begin{theo}\label{Nikifbound}
	If $G$ is a triangle-free $n$-vertex graph, then $$q_n(G) \leq \janconst \cdot n < 0.1596n.$$
\end{theo}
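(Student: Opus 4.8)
\medskip
\noindent\textbf{Proof strategy.}
The plan is to combine a short spectral estimate, which reduces $q_n(G)$ to a ratio of induced‑subgraph counts, with a flag algebra computation that bounds this ratio.

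\emph{Step 1: a spectral bound via local test vectors.}
Recall that $Q$ is positive semidefinite with $x^{\top}Qx=\sum_{ij\in E(G)}(x_i+x_j)^2$, so $q_n(G)=\min_{x\neq 0}\frac{x^{\top}Qx}{x^{\top}x}$; we may assume $e(G)\ge 1$ since otherwise $q_n(G)=0$. For each edge $uv\in E(G)$ take the test vector $x^{(uv)}:=\mathbf 1_{N(u)}-\mathbf 1_{N(v)}$. Since $G$ is triangle‑free, $N(u)$ and $N(v)$ are independent sets, they are disjoint (as $u\sim v$ would otherwise force a triangle), and $v\in N(u)$; hence $x^{(uv)}\neq 0$ and $\|x^{(uv)}\|^2=d(u)+d(v)$. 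A short case analysis, again using triangle‑freeness, shows that an edge $ab$ contributes to $(x^{(uv)})^{\top}Qx^{(uv)}$ exactly when one endpoint of $ab$ lies in $N(u)\cup N(v)$ and the other in $V(G)\setminus(N(u)\cup N(v))$, and that these contributions are in bijection with the induced $4$‑vertex paths having $uv$ as an end‑edge (with the inner vertex of that edge recorded). Summing $q_n(G)\,\|x^{(uv)}\|^2\le (x^{(uv)})^{\top}Qx^{(uv)}$ over all edges, and using that in a triangle‑free graph $\sum_{uv\in E(G)}(d(u)+d(v))=\sum_v d(v)^2 = 2\,\ind(P_3,G)+2\,e(G)$ while every induced $4$‑vertex path is counted once per end‑edge, i.e.\ twice, we obtain
\[
q_n(G)\ \le\ \frac{\ind(P_4,G)}{\ind(P_3,G)+e(G)} ,
\]
where $\ind(P_k,G)$ is the number of induced copies of the $k$‑vertex path in $G$.

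\emph{Step 2: reduction to an extremal density inequality.}
Writing the right‑hand side in terms of the induced densities $p_3,p_4$ (the probabilities that $3$, resp.\ $4$, uniformly random vertices induce a path) and the edge density, the leading term of $\tfrac1n\cdot\frac{\ind(P_4,G)}{\ind(P_3,G)+e(G)}$ is $\tfrac{p_4}{4p_3}$. Thus, up to lower‑order terms and a finite check for small $n$, Theorem~\ref{Nikifbound} reduces to proving that every triangle‑free graph satisfies $p_4\le \tfrac{30}{47}\,p_3$. It is cleaner to establish the equivalent homogeneous inequality between subgraph counts,
\[
94\,\ind(P_4,G)\ \le\ 15\,n\bigl(\ind(P_3,G)+e(G)\bigr),
\]
which, after lifting $\ind(P_3,G)$ and $e(G)$ to the level of $4$‑vertex subsets, is a genuine polynomial inequality in induced densities of triangle‑free graphs on $4$ vertices and is valid for all $n$ above a small threshold; combined with Step~1 it gives $q_n(G)\le\tfrac{15}{94}n$, and $\tfrac{15}{94}<0.1596$.

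\emph{Step 3: the flag algebra computation.}
We work in the flag algebra of triangle‑free graphs (so only triangle‑free flags occur) and run the standard semidefinite method: fix the types and the flags of bounded size (on at most five or six vertices), assemble the semidefinite program whose optimum upper‑bounds $\sup_G p_4/p_3$, solve it numerically, and round the dual certificate to exact rationals to obtain a verifiable sum‑of‑squares identity witnessing $47\,\ind(P_4,G)\le 30\,\ind(P_3,G)$ asymptotically, hence the homogeneous inequality of Step~2 for all large $n$. The finitely many remaining small triangle‑free graphs are handled by direct computation of $q_n(G)$.

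\emph{The main difficulty.}
Step~1 is short and elementary, so the substance of the argument is Step~3: one must (i) choose the flag set large enough that the SDP actually certifies the constant $\tfrac{30}{47}$ (smaller flag sets only yield a weaker ratio, and the extremal configuration — close to the Higman–Sims graph — is far from forcing exactness), (ii) produce and rigorously verify an exact rational sum‑of‑squares certificate extracted from the numerical solution, and (iii) carefully track the triangle‑free constraint together with the lower‑order ($e(G)$ and small‑$n$) terms so that the asymptotic flag inequality upgrades to the exact bound claimed for every $n$.
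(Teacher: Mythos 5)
Your overall route is the same as the paper's: the test vector $x^{(uv)}=\mathbf 1_{N(u)}-\mathbf 1_{N(v)}$ in the Rayleigh quotient of $Q$, summation over edges to get $q_n(G)\le \ind(P_4,G)/(\ind(P_3,G)+e(G))$, and a flag-algebra bound on the $P_4$-to-$P_3$ ratio. Your Step~1 is correct and complete (the paper phrases the same identity via strong homomorphisms, $\sh(P_3,G)=2\ind(P_3,G)+2e(G)$ and $\sh(P_4,G)=2\ind(P_4,G)$ for triangle-free $G$, but the content is identical). However, the proposal has two genuine gaps. First, the decisive inequality $47p_4\le 30p_3$ for triangle-free graphs is never actually proved: you describe the standard procedure (set up the SDP, solve numerically, round the dual), but you do not exhibit a certificate, and the entire difficulty of the theorem lives there. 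The paper supplies an explicit one: six concrete sum-of-squares expressions on $5$-vertex flags, the matrix $M$ recording their expansions over the twelve $5$-vertex triangle-free graphs with at least two edges, and an explicit nonnegative dual vector whose product with $M$ is dominated coordinatewise by the expansion of $30\,p_3-47\,p_4$. Without producing (or at least verifying the existence of) such a certificate at the stated constant, the proof is not complete.

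Second, your mechanism for upgrading the asymptotic flag inequality to the exact statement for every $n$ does not work as described. A flag-algebra certificate yields $30\,p_3(G)-47\,p_4(G)\ge -O(1/n)$, i.e.\ the count inequality $94\,\ind(P_4,G)\le 15\,n\,(\ind(P_3,G)+e(G))$ only up to an additive $O(n^3)$ error. That error does not disappear for large $n$ after dividing by $\ind(P_3,G)+e(G)$ in general, so it is not a ``small $n$'' issue that a finite check can absorb; it would only give $q_n(G)\le \frac{15}{94}n+O(1)$. The paper closes this gap with a blow-up argument: if some $G$ violated the count inequality by $\eps n^4$, then every blow-up $G^{(b)}$ violates the normalized density inequality by the same fixed amount (the relevant quantities are scale-invariant under blow-ups), contradicting the asymptotic flag statement. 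You would need this scaling step (or an exact, fully error-tracked certificate) in place of your proposed finite check.
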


Note that, if $G$ is $d$-regular, then $\lambda_1(G) = d$ and $q_n(G) = \lambda_n(G) + d=\lambda_n(G)+\lambda_1(G)$. Thus  Theorem~\ref{Nikifbound} implies that $\lambda_1(G)+\lambda_n(G) < 0.1596 n < \frac{4n}{25}$ for every regular triangle-free $n$-vertex graph $G$, confirming Conjecture~\ref{brandtconj} in strong form.

%In this short note we confirm Brandt's conjecture in a strong form.
%\begin{theo}
%\label{Brandtsconj}
%If $G$ is a regular triangle-free $n$-vertex graph, then $\frac{\lambda_1+\lambda_n}{n}\leq \frac{15}{94} \approx  % \frac7{44} \approx 0.1591$
%\end{theo}

%The linear combination $\lambda_1+\lambda_n$ is motivated by Conjectures~\ref{sparsehalf} and \ref{makingbipartite}.
%Other linear combinations of eigenvalues have been studied as well; see Nikiforov~\cite{MR2274331} for a good overview on these problems. 
It remains open to determine a sharp upper bound for  $q_n(G)/n$ for triangle-free $n$-vertex graph $G$.
While we only prove Theorem~\ref{Nikifbound} with the constant $\frac{15}{94} \approx 0.1596$, a larger flag algebra computation yields $q_n(G) < \nikifconst n$.
Also, one can additionally assume that $G$ is regular and use flag algebras to show a slightly stronger bound $q_n(G)=\lambda_1(G)+\lambda_n(G) < \brandtconst n$.
As we believe neither of these two bounds are sharp (see Section~\ref{sec:outro}), we omit presenting their proofs.

\section{Proof of Theorem~\ref{Nikifbound}}
Our proof is based on bounding the ratio between the number of induced paths with $3$ and $4$ vertices in triangle-free graphs.
On one hand, we upper bound $q_n(G)$ in terms of this ratio in Lemma~\ref{l:rayleigh} and Corollary~\ref{c:qn}. On the other hand, Lemma~\ref{l:flag}, which is proved using flag algebras, gives a sufficiently good bound on the ratio.

For an edge $e=xy$ of a graph $G$,
let $m_{xy}$ be the number of edges $uv \in E(G)$  such that $ux,vy \in E(G)$.
%Observe that $m_{xy}$ is equal to the number of extensions of a map $1 \to x$, $2 \to y$ to a homomorphism of $C_4$.
%Also, by symmetry, $m_{xy} = m_{yx}$.
For a vertex $x \in V(G)$, let $w_x$ to be the number of walks of length two starting in $x$, i.e. $w_x$ is the number of edges $uv \in E(G)$ such that $xu \in E(G)$. 
%Analogously to the previous case, $w_x$ counts the number of extensions of a map $1 \to y$, $2 \to x$ to a homomorphism of $P_3$.
% Also note that $w_x \ge m_{xy}$.

\begin{lemma}\label{l:rayleigh}
	If $G$ is an $n$-vertex triangle-free graph and $xy \in E(G)$, then
\begin{equation}\label{e:xy}
	\left( \deg(x) + \deg(y) \right) \cdot q_n(G) \leq  w_x + w_y-2m_{xy}\,.
\end{equation}
\end{lemma}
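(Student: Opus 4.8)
The plan is to use the Rayleigh quotient characterization of the smallest eigenvalue $q_n(G)$ of the signless Laplacian $Q = A + D$. Recall that for any nonzero vector $z \in \mathbb{R}^{V(G)}$,
\[
	q_n(G) \leq \frac{z^\top Q z}{z^\top z} = \frac{\sum_{uv \in E(G)} (z_u + z_v)^2}{\sum_{v \in V(G)} z_v^2}\,.
\]
The idea is to choose a test vector $z$ supported on the closed neighborhood structure around the edge $xy$ so that the denominator becomes $\deg(x) + \deg(y)$ and the numerator becomes $w_x + w_y - 2m_{xy}$. A natural candidate, exploiting triangle-freeness, is to set $z_u = 1$ for every neighbor $u$ of $x$, set $z_v = -1$ for every neighbor $v$ of $y$, and $z_u = 0$ otherwise. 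Since $G$ is triangle-free, $N(x)$ is an independent set and so is $N(y)$; moreover $x \in N(y)$ gets value $-1$ and $y \in N(x)$ gets value $+1$, and crucially $N(x) \cap N(y) = \emptyset$ because a common neighbor would form a triangle with the edge $xy$. Hence $z$ is well-defined (no vertex is asked to take two values) and $z^\top z = |N(x)| + |N(y)| = \deg(x) + \deg(y)$, giving the denominator.

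Next I would compute the numerator $\sum_{uv \in E(G)} (z_u + z_v)^2$. An edge contributes $0$ unless at least one endpoint lies in $N(x) \cup N(y)$. Edges with both endpoints in $N(x)$ contribute nothing since $N(x)$ is independent (same for $N(y)$); edges with exactly one endpoint in $N(x) \cup N(y)$ and the other outside contribute $(\pm 1)^2 = 1$ each; and edges $uv$ with $u \in N(x)$ and $v \in N(y)$ contribute $(1 + (-1))^2 = 0$. So the numerator equals the number of edges with exactly one endpoint in $N(x) \cup N(y)$, i.e.
\[
	\bigl(\text{edges with an endpoint in } N(x)\bigr) + \bigl(\text{edges with an endpoint in } N(y)\bigr) - 2\bigl(\text{edges from } N(x) \text{ to } N(y)\bigr)\,.
\]
The first term counts edges incident to $N(x)$: since $N(x)$ is independent, every such edge has exactly one endpoint in $N(x)$, so this count is exactly $\sum_{u \in N(x)} \deg(u) = w_x$ by definition of $w_x$. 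Similarly the second term is $w_y$. Finally, the number of edges $uv$ with $u \in N(x)$, $v \in N(y)$ is precisely $m_{xy}$ by definition (edges $uv$ with $ux, vy \in E(G)$), and these are counted with a sign $2$ coming from the cross terms in both expansions — so the numerator is $w_x + w_y - 2m_{xy}$, as desired.

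The one subtlety to check carefully — and the only real obstacle — is the bookkeeping of edges lying "on the boundary," in particular the edge $xy$ itself and edges internal to $N(x) \cup N(y)$, to make sure nothing is double counted or missed. The edge $xy$ has $x \in N(y)$ with $z_x = -1$ and $y \in N(x)$ with $z_y = +1$, so it contributes $(1-1)^2 = 0$, consistent with it being an $N(x)$-to-$N(y)$ edge; one must verify it is counted exactly once in $w_x$ (as an edge incident to $y \in N(x)$, namely the walk $x \to y \to x$... — here one should note $w_x$ counts edges $uv$ with $xu \in E(G)$, and the edge $xy$ qualifies with $u = y$), once in $w_y$, and once in $m_{xy}$, so that its net contribution to $w_x + w_y - 2m_{xy}$ is $1 + 1 - 2 = 0$, matching. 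A clean way to organize all of this is to write $w_x + w_y - 2m_{xy} = \sum_{uv \in E(G)} \bigl( [u \in N(x)] + [v \in N(x)] + [u \in N(y)] + [v \in N(y)] - 2[u \in N(x), v \in N(y)] - 2[v \in N(x), u \in N(y)] \bigr)$ and verify that the summand equals $(z_u + z_v)^2$ edge by edge, using independence of $N(x)$, independence of $N(y)$, and $N(x) \cap N(y) = \emptyset$. Combining the two computations with the Rayleigh inequality yields $\left(\deg(x) + \deg(y)\right) q_n(G) \leq w_x + w_y - 2m_{xy}$.
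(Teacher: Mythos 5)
Your proposal is correct and follows essentially the same route as the paper: the identical test vector $z$ (supported on $N(x)\cup N(y)$ with values $\pm 1$, well-defined by triangle-freeness) plugged into the Rayleigh quotient for $Q$. The only cosmetic difference is that you evaluate $z^\top Q z$ via the identity $\sum_{uv\in E}(z_u+z_v)^2$ and count edge by edge, whereas the paper expands it as $\sum_u z_u^2\deg(u)+2\sum_{uv\in E}z_uz_v$; your bookkeeping, including the treatment of the edge $xy$ itself, checks out.
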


\begin{proof}
%	Let $A$, $D$ and $Q=D+A$ be the adjacency, the degree and the signless Laplacian matrices of $G$, respectively.
%	For convenience, let us fix an arbitrary ordering of the vertices of $G$, say $V(G) = \{u_1,u_2, \dots u_n\}$, and suppose that the rows and columns of $A$, $D$, and $Q$ are with respect to this ordering.
	Define a vector $z = (z_v)_{v \in V(G)} \in \mathbb{R}^{V(G)}$  by
	\begin{equation*}
	z_v = \begin{cases}
	+1, & \text{if } xv \in E(G), \\
	-1, & \text{if } yv \in E(G), \\
	0, & \text{otherwise.} 
	\end{cases}\end{equation*}
	The vector $z$ is well-defined since $G$ is triangle-free. Also note that $\| z \|^2 = \deg(x) + \deg(y)$. Let $Q$ be the signless Laplacian matrix of $G$.
	We have \begin{align*}z^T Q z &= \sum_{u,v \in V(G)}  Q_{uv}z_uz_v = \sum_{u \in V(G)}(z_u)^2 \deg(u) +  2\cdot\sum_{
		uv \in E(G)}z_uz_v \\
		&=  w_x+w_y+  2\cdot \sum_{
		uv \in E(G)}z_uz_v=  w_x + w_y-2m_{xy}, \end{align*} 
		where in the last equality we used that $G$ is triangle-free.
	Since $Q$ is symmetric, $q_n(G)$ is upper bounded by the Rayleigh-Ritz quotient of $z$, i.e.
	\[q_n(G) \leq \frac{z^T Q z}{\|z\|^2} = \frac{w_x + w_y - 2m_{xy}}{\deg(x)+\deg(y)}\,,\]
	as desired.
\end{proof}

A map $\varphi:V(H) \to V(G)$ is \emph{a strong homomorphism} from a graph $H$ to a graph $G$ if for every pair of vertices $u,v \in V(H)$ we have $uv \in E(H)$ if and only if  $\varphi(u)\varphi(v) \in E(G)$.
Let $\sh(H,G)$ denote the number of strong homomorphisms from $H$ to $G$.
Let $P_k$ denote the $k$-vertex path. Summing the bound from Lemma~\ref{l:rayleigh} over all the edges of $G$ yields the following.

%\begin{figure}[t]
%{\hfill\includegraphics[scale=0.95,page=42]{flags.pdf}\hfill\includegraphics[scale=0.95,page=43]{flags.pdf}\hfill}
%\caption{The graphs $C_4$ and $P_3$ on the vertex-set $\{1,2,3,4\}$.} \label{fig:homC4P3}
%\end{figure}

\begin{corl}\label{c:qn}
	If $G$ is an $n$-vertex triangle-free graph, then \begin{equation}\label{e:sh}\sh(P_3,G)\cdot q_n(G) \le  \sh(P_4,G)\,.\end{equation}
\end{corl}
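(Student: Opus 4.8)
The plan is to sum inequality~\eqref{e:xy} from Lemma~\ref{l:rayleigh} over all edges $xy \in E(G)$ and then reinterpret each of the three resulting sums as a count of strong homomorphisms from small graphs. First I would observe that, since $G$ is triangle-free, a strong homomorphism $\varphi \colon V(P_3) \to V(G)$ is exactly an ordered pair of edges sharing an endpoint (a "cherry" with a distinguished center), so $\sh(P_3,G) = \sum_{x \in V(G)} \deg(x)(\deg(x)-1)$; one then checks directly that $\sum_{xy \in E(G)} (\deg(x)+\deg(y)) = \sum_{x} \deg(x)^2$, and combining these with the fact that every edge contributes $\deg(x)+\deg(y) \geq 2$ accounts for the left-hand side. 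Actually, the cleanest bookkeeping is: $\sum_{xy\in E(G)}(\deg x+\deg y)=\sum_v \deg(v)^2$, while $\sh(P_3,G)=\sum_v\deg(v)(\deg(v)-1)=\sum_v\deg(v)^2-2|E(G)|$. So the edge-sum overcounts $\sh(P_3,G)$ by exactly $2|E(G)|$; I would handle this by noting $2|E(G)| \le \sum_v \deg(v)^2$ is not quite what is needed, so instead I track the right side carefully and show the slack is absorbed.

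The key identities for the right-hand side are: $\sum_{xy \in E(G)} (w_x + w_y) = \sum_{x \in V(G)} \deg(x)\, w_x$ (each vertex $x$ is summed once for every edge at $x$), and this last quantity counts walks of the form (edge at $x$) followed by (a length-two walk from $x$), i.e. ordered configurations $u\,x\,?$ together with an edge $vv'$ with $xv\in E(G)$ — more usefully, $\sum_x \deg(x) w_x$ counts pairs (edge $ax$, walk $x b c$), which is precisely $\sh(P_4,G)$ when the underlying four vertices $a,x,b,c$ are distinct and induce a path, plus degenerate terms. Meanwhile $\sum_{xy \in E(G)} 2 m_{xy}$ counts ordered 4-cycles $x y v u$ with $uv, ux, vy \in E(G)$, i.e. (twice) the homomorphic image count of $C_4$; since $G$ is triangle-free these closed configurations are exactly the non-induced-$P_4$ "errors" that arise when the four vertices in the walk $a\,x\,b\,c$ fail to be a path, so the subtraction $-2m_{xy}$ cancels them. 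The upshot is the clean identity $\sum_{xy\in E(G)}(w_x+w_y-2m_{xy}) = \sh(P_4,G)$, and combined with $\sum_{xy}(\deg x+\deg y)\cdot q_n(G) \ge \sh(P_3,G)\cdot q_n(G)$ — which requires $q_n(G)\ge 0$, true since $Q$ is positive semidefinite — we are done.

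The main obstacle, and the step deserving the most care, is the exact combinatorial accounting showing $\sum_{xy\in E(G)}(w_x+w_y-2m_{xy}) = \sh(P_4,G)$ and $\sum_{xy\in E(G)}(\deg x+\deg y) \ge \sh(P_3,G)$: one must be scrupulous about which homomorphisms are strong (injective on adjacent vertices is automatic, but one needs injectivity overall and the non-edge condition), about orientation/labeling multiplicities (a path $P_4$ has an automorphism group of order $2$, a path $P_3$ also order $2$), and about how triangle-freeness eliminates precisely the degenerate terms. I expect that triangle-freeness is used three times: to make the vector $z$ in Lemma~\ref{l:rayleigh} well-defined, to kill the cross terms in $z^TQz$ that would otherwise appear, and here to ensure that a walk $a\,x\,b\,c$ with $a,c$ possibly equal or adjacent to $x$'s other neighbors is either a genuine induced $P_4$ or is cancelled by an $m_{xy}$ term. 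Once the two sums are identified with $\sh(P_4,G)$ and (a lower bound involving) $\sh(P_3,G)$, inequality~\eqref{e:sh} follows immediately by summing~\eqref{e:xy} and using $q_n(G)\ge 0$.
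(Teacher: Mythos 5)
Your overall strategy is the same as the paper's: sum \eqref{e:xy} over all edges and identify $\sum_{xy\in E(G)}(\deg x+\deg y)$ with $\sh(P_3,G)$ and $\sum_{xy\in E(G)}(w_x+w_y-2m_{xy})$ with $\sh(P_4,G)$. Your treatment of the right-hand side is correct: $\sum_{xy}(w_x+w_y)$ counts walks of length three, $2\sum_{xy}m_{xy}$ counts exactly those walks $\psi(1)\psi(2)\psi(3)\psi(4)$ with $\psi(4)\psi(1)\in E(G)$, and triangle-freeness guarantees that what survives the subtraction is precisely $\sh(P_4,G)$ (the surviving maps are automatically injective and induce a path, since each identification $\psi(1)=\psi(3)$, $\psi(2)=\psi(4)$, $\psi(1)=\psi(4)$ and each chord $\psi(1)\psi(3)$, $\psi(2)\psi(4)$ forces either a triangle or the forbidden edge $\psi(1)\psi(4)$).

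The one genuine error is the formula $\sh(P_3,G)=\sum_v\deg(v)(\deg(v)-1)$. The paper's strong homomorphisms are not required to be injective: the condition only constrains pairs $u,v$ of vertices of $H$, so the two non-adjacent endpoints of $P_3$ may be mapped to the same vertex (this reading is confirmed later in the paper, where the non-injective maps are explicitly counted as an $O(k^{\ell-1})$ correction rather than excluded). Hence $\sh(P_3,G)=\sum_v\deg(v)^2=\sum_{xy\in E(G)}(\deg x+\deg y)$ exactly; there is no $2|E(G)|$ discrepancy and no slack to absorb. As written, your argument only yields $\bigl(\sh(P_3,G)-2|E(G)|\bigr)\,q_n(G)\le \sh(P_4,G)$, which is strictly weaker than \eqref{e:sh} and, moreover, would not combine with Lemma~\ref{l:flag} (also stated for the non-injective count) to give Theorem~\ref{Nikifbound}. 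The fix is already contained in your own computation: use the identity $\sum_{xy}(\deg x+\deg y)=\sum_v\deg(v)^2=\sh(P_3,G)$, after which summing \eqref{e:xy} gives \eqref{e:sh} directly and the appeal to $q_n(G)\ge 0$ becomes unnecessary.
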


\begin{proof} First, note that \begin{equation}\label{e:p2}\sum_{xy \in E(G)}(\deg(x)+\deg(y)) = \sum_{x \in V(G)} \deg^2(x) = \sh(P_3,G),\end{equation}
where in the last equality we used that $G$ is triangle-free. Meanwhile, $\sum_{xy \in E(G)}(w_x+w_y)$ is equal to the number of walks of length three in $G$, i.e. the number of maps $\phi: \{1,2,3,4\} \to V(G)$ such that $\{\phi(1)\phi(2),\phi(2)\phi(3),\phi(3)\phi(4)\} \subset E(G)$. Similarly, the expression $2\sum_{xy \in E(G)}m_{xy}$ is equal to the number of maps $\phi: \{1,2,3,4\} \to V(G)$ such that $\{\phi(1)\phi(2), \phi(2)\phi(3), \phi(3)\phi(4),\phi(4)\phi(1)\} \subset E(G)$. It follows that $\sum_{xy \in E(G)}(w_x+w_y -2m_{xy})$ counts the maps $\psi: \{1,2,3,4\} \to V(G)$ such that $\{\psi(1)\psi(2),\psi(2)\psi(3),\psi(3)\psi(4)\} \subset E(G)$ and $\psi(4)\psi(1) \not \in E(G)$, i.e., 
 \begin{equation}\label{e:p3}\sum_{xy \in E(G)}(w_x+w_y -2m_{xy}) = \sh(P_4,G).\end{equation}
 Summing \eqref{e:xy} over all $xy \in E(G)$ and using  \eqref{e:p2} and \eqref{e:p3}, we obtain \eqref{e:sh}.
\end{proof}

Theorem \ref{Nikifbound} is an immediate consequence of the above corollary and the following lemma which is proved using standard, albeit computer-assisted flag-algebra calculation.

\begin{lemma}\label{l:flag}
If $G$ is an $n$-vertex triangle-free graph, then
\begin{equation}\label{eq:hom}
\sh(P_4,G) \le \frac{15n}{94} \cdot \sh(P_3,G).
\end{equation}
\end{lemma}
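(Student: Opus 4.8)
The plan is to prove the inequality $\sh(P_4,G) \le \tfrac{15n}{94}\sh(P_3,G)$ via the flag algebra method of Razborov, working in the theory of triangle-free graphs. First I would set up the problem correctly: both $\sh(P_3,G)$ and $\sh(P_4,G)$ count labeled (ordered) strong homomorphisms, and $\sh(P_3,G) = \sum_x \deg^2(x)$ while $\sh(P_4,G)$ counts ordered walks of length three with the two endpoints nonadjacent. Since in a triangle-free graph $\sh(P_3,G) = \Theta(n^2)$ unless $G$ is very sparse (and the degenerate low-degree cases can be handled separately or absorbed), the right normalization is to divide through by $n$ and pass to densities: set $p_3 = \sh(P_3,G)/n^3$ and $p_4 = \sh(P_4,G)/n^4$, so the claim becomes $p_4 \le \tfrac{15}{94}\, p_3$ asymptotically, i.e. $\tfrac{15}{94} p_3 - p_4 \ge 0$. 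Both $p_3$ and $p_4$ are convergent graph parameters expressible as linear combinations of (labeled) densities of subgraphs on at most $4$ vertices, so this is a genuine flag-algebra optimization: minimize $\tfrac{15}{94} p_3 - p_4$ over the triangle-free limit objects.

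Next I would carry out the flag-algebra certificate. Choose a suitable set of types — almost certainly types of size $1$, $2$, and $3$ (a single vertex, an edge, a nonedge, and the various triangle-free configurations on three labeled vertices) — and a flag expansion level of $N = 5$ or $N = 6$ vertices. For each type $\sigma$ one picks a list of $\sigma$-flags on $(N+|\sigma|)/2$ vertices, forms the corresponding positive-semidefinite multiplier matrices $M_\sigma \succeq 0$, averages the quadratic forms over the choice of the labeled copy of $\sigma$ (the "unlabeling" operator $\unlab{\cdot}$), and expands everything in the basis of triangle-free graphs on $N$ vertices. One then asks a semidefinite program to find matrices $M_\sigma \succeq 0$ such that
\begin{equation}\label{e:sdp}
\frac{15}{94}\, p_3 - p_4 \;\ge\; \sum_{\sigma} \unlab{\langle M_\sigma, \mathbf{f}_\sigma \mathbf{f}_\sigma^T\rangle} \;\ge\; 0
\end{equation}
holds identically in the flag algebra, where the left inequality is a comparison of the coefficients of each $N$-vertex triangle-free graph. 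Solving the SDP numerically, then rounding the solution to exact rationals (clearing denominators, checking the rounded matrices are still PSD via their leading principal minors or an $LDL^\top$ decomposition, and verifying the coefficient-wise inequality exactly), produces the desired certificate. The constant $\tfrac{15}{94}$ is presumably exactly what the rounded SDP permits; its somewhat unusual denominator is a hallmark of a rounded flag-algebra bound.

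Finally I would translate the asymptotic density inequality back to the finite statement \eqref{eq:hom}. The standard argument: if \eqref{eq:hom} failed for some finite triangle-free $G$ on $n$ vertices, then a sequence of counterexamples would yield (by compactness) a triangle-free graphon / limit object violating $p_4 \le \tfrac{15}{94} p_3$, contradicting \eqref{e:sdp}; alternatively, because the flag-algebra identity in \eqref{e:sdp} when unpacked is an inequality among subgraph counts that holds for every finite graph up to a lower-order error term $O(n^{3})$, one checks that this error term is in fact absorbed — here one must be slightly careful, since we want the clean inequality with no error term. The cleanest route is to verify that the flag-algebra inequality can be made exact at the finite level: the quadratic-form terms $\unlab{\langle M_\sigma, \mathbf{f}_\sigma\mathbf{f}_\sigma^T\rangle}$ are, after clearing denominators, nonnegative combinations of counts of homomorphic images, and the "diagonal" correction terms (coming from non-injective maps in the averaging) contribute only to $p_3$'s side, which dominates; one checks the small cases ($n \le N$, and graphs with a vertex of degree $\le 2$) by hand.

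The main obstacle I expect is not conceptual but computational and bookkeeping: getting the SDP to return a solution that rounds cleanly to the exact constant $\tfrac{15}{94}$, choosing the types and flag size ($N=5$ versus $N=6$) large enough for the bound to go through but small enough to be tractable, and handling the passage from the asymptotic inequality to the exact finite inequality without an additive error term — in particular making sure the lower-order terms in the counting identities (from degenerate/non-injective maps) have the right sign or are genuinely negligible for the finite statement. The paper signals this is "standard, albeit computer-assisted," so the expectation is that a careful but routine execution of the flag-algebra pipeline, with the certificate posted at \oururl, completes the proof.
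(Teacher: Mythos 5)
Your overall strategy is the paper's strategy: this lemma is proved by a computer-assisted flag-algebra certificate in the theory of triangle-free graphs, and the paper indeed works at $N=5$ with types of sizes $1$ and $2$, exhibiting six explicit sum-of-squares expressions with rational coefficients whose average dominates $\frac{30}{47}\cdot p(P_3\text{-density}) - p(P_4\text{-density})$ coordinate-wise over the twelve $5$-vertex triangle-free graphs with at least two edges. So the pipeline you describe is the right one, and the remaining work is exactly the certificate you defer to the SDP solver; a proposal cannot be judged complete without it, but that is inherent to the computer-assisted nature of the statement.

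The one place where your plan has a genuine soft spot is the passage from the asymptotic density inequality to the exact finite inequality \eqref{eq:hom}. Your first route starts from ``a sequence of counterexamples,'' but a single finite counterexample does not by itself contradict an asymptotic inequality that holds only up to $o(1)$, and nothing in your argument manufactures the sequence. Your second route (making the certificate exact at the finite level and arguing the non-injective correction terms ``contribute only to $p_3$'s side'') is both harder than necessary and unsubstantiated as stated. The clean fix, which the paper uses, is the blowup trick: if $\sh(P_4,G) = \frac{15n}{94}\sh(P_3,G) + \eps n^4$ with $\eps>0$, then since $\sh(P_k,G^{(b)}) = b^k\,\sh(P_k,G)$ exactly for the $b$-fold blowup $G^{(b)}$, every blowup violates the density form of the inequality by the same constant $564\eps$, and letting $b\to\infty$ kills the lower-order terms and contradicts the asymptotic flag inequality. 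You should replace your compactness hand-wave with this construction; with it, and with the explicit rounded certificate in hand, your argument matches the paper's.
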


\begin{proof}
Suppose the lemma is false, and let $G$ be an $n$-vertex triangle-free graph such that \begin{equation}\label{eq:hom2}\sh(P_4,G) =\frac{15n}{94} \cdot \sh(P_3,G) + \eps n^4\,,\end{equation} for some $\eps  > 0$.
Let $G^{(b)}$ be the $b$-blowup of $G$, obtained by replacing every vertex of $G$ by $b$ pairwise non-adjacent vertices.  Then $\sh(P_k,G^{(b)}) = \sh(P_k,G) \cdot b^{k}$ for $k=3,4$.
In particular, for every $b \in \mathbb{N}$, the graph $G^{(b)}$ satisfies the analogue of \eqref{eq:hom2} as well.

Let us now reformulate \eqref{eq:hom2} in the flag algebra language~\cite{flagsRaz}.
Given a graph $H$, let $p\left(\flChry,H\right)$ be the probability that a $3$-vertex subset of $V(H)$ chosen uniformly at random induces exactly two edges.
Analogously, let $p\left(\flPath,H\right)$ be the probability that a randomly chosen $4$-vertex subset induces a path of length $3$.

For every fixed $\ell$-vertex graph $F$ and a $k$-vertex graph $H$, only $O(k^{\ell - 1})$ maps $V(F) \to V(H)$ are non-injective. Therefore, $\sh(F,H) = |\Aut(F)| \cdot p(F,H) \cdot {k \choose \ell} + O(k^{\ell -1})$, so
in particular every $k$-vertex triangle-free graph $H$ satisfies
\[\sh(P_4,H) = \frac{k^4}{12} \cdot p\left(\flPath,H\right) + O\left(k^3\right)
\quad\hbox{and}\quad
\sh(P_3,H) = \frac{k^3}3 \cdot p\left(\flChry,H\right) + O\left(k^2\right)\,.\]
Therefore, the fact $G^{(b)}$ satisfies~\eqref{eq:hom2}, after multiplying by $564 / (bn)^4$ and rearranging, translates to
\begin{equation*}
\lim_{b \to \infty} \; 30 \cdot p\left(\flChry,G^{(b)}\right) - 47 \cdot p\left(\flPath,G^{(b)}\right) = -564\eps^4.
\label{eq:counterex}
\end{equation*}
In order to derive a contradiction, we present a flag algebra computation proving that an inequality
\begin{equation}\label{eq:flag}
30 \cdot \flChry - {47}\cdot \flPath \ge 0
\end{equation}
asymptotically holds in the theory of triangle-free graphs. To see that, consider the following $6$~flag-algebra expressions, which are all non-negative:
\begin{align*}
\mbox{1)\quad\quad} &{\left(13\cdot\left(\flREMPToneA+\flREMPToneB+\flREMPToneC\right) - 52\cdot\left(\flREMPTtwoA + \flREMPTtwoB +\flREMPTtwoC\right)+84\cdot\flREMPTthree  \right)^2}
\\
\mbox{2)\quad\quad} &{\left(31\cdot\left(\flRCOCHone+\flRCOCHtwo\right) - 63\cdot\left(\flRCOCHfour + \flRCOCHfive\right)+3\cdot\flRCOCHthree  \right)^2}
\\
\mbox{3)\quad\quad} &{\left(94\cdot\flRcochr -55\cdot\flRedge -14\cdot\flRpath +58\cdot\flRchry  \right)^2}
\\
\mbox{4)\quad\quad} &{\flRNEone \times \left(2\cdot\flRNEone + 10 \cdot \flRNEtwo - 24\cdot \flRNEall   \right)^2}
\\
\mbox{5)\quad\quad} &{\flREone \times \left(14\cdot \flREnone + 19\cdot\flREone - 44 \cdot \flREtwo \right)^2}
\\
\mbox{6)\quad\quad} &{\flREone \times \left(9\cdot \flREnone - 14\cdot\flREone - 3 \cdot \flREtwo \right)^2}
\end{align*}

\begin{figure}
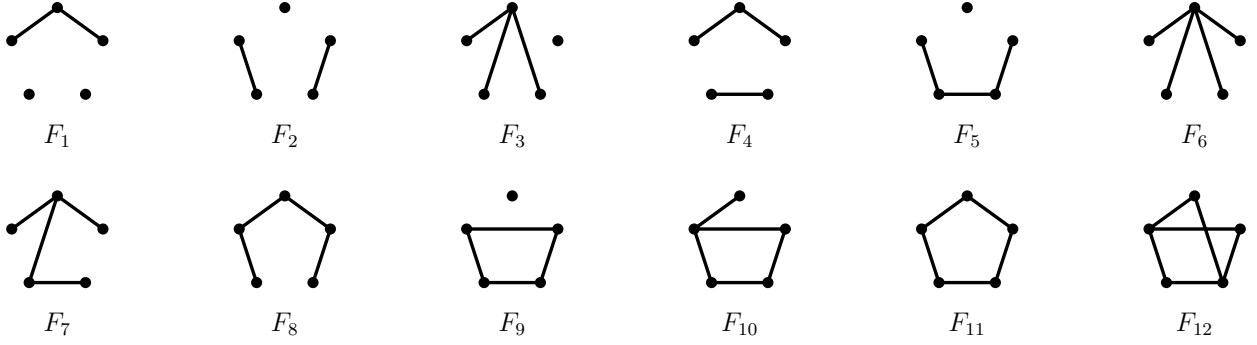

\includegraphics[scale=0.85,page=1]{flags.pdf}
\hfill\includegraphics[scale=0.85,page=2]{flags.pdf}
\hfill\includegraphics[scale=0.85,page=3]{flags.pdf}
\hfill\includegraphics[scale=0.85,page=4]{flags.pdf}
\hfill\includegraphics[scale=0.85,page=5]{flags.pdf}
\hfill\includegraphics[scale=0.85,page=6]{flags.pdf}

\vskip 4mm

\includegraphics[scale=0.85,page=7]{flags.pdf}
\hfill\includegraphics[scale=0.85,page=8]{flags.pdf}
\hfill\includegraphics[scale=0.85,page=9]{flags.pdf}
\hfill\includegraphics[scale=0.85,page=10]{flags.pdf}
\hfill\includegraphics[scale=0.85,page=11]{flags.pdf}
\hfill\includegraphics[scale=0.85,page=12]{flags.pdf}
\caption{The set $\cF$ of $5$-vertex triangle-free graphs with at least $2$ edges.} \label{fig:5flags}
\end{figure}

Let $\cF$ be the set of all the $5$-vertex triangle-free graphs with at least $2$ edges.
A case analysis yields $|\cF| = 12$; see Figure~\ref{fig:5flags}.
Now observe that averaging over all choices of the labelled vertices in each of the $6$ expressions yields a linear combination of subgraph densities, where every term has $5$ vertices and at least $2$ edges.
Thus a flag algebra argument yields that the average of the $i$-th expression is equal to the $i$-th coordinate of $M \cdot \left(v_{\cF}\right)^T$, where $v_{\cF} = \left(F_1,\dots,F_{12}\right)$ and 
\[
\tiny
M=\frac1{30} \times
\begin{blockarray}{cccccccccccc}
% {F_1 & F_2 & F_3 & F_4 & F_5 & F_6 & F_7 & F_8 & F_9 & F_{10} & F_{11} & F_{12}}\\ \\
  \begin{block}{(cccccccccccc)}
    507&2028&0&-4056&-3549&0&1248&8112&16224&-13104&0&21168&\\
    0&0&2883&381&961&0&-3906&-4098&3844&63&19845&0&\\
    12100&-23688&-19140&-23620&12172&20184&-37248&17486&47664&2956&86730&-7392&\\
    0&0&6&140&0&0&-48&-100&0&358&-1200&0&\\
    196&0&798&196&-420&2166&762&-1036&-2464&-702&-3080&792&\\
    81&0&-378&81&54&1176&-165&27&-108&-87&-135&279&\\
  \end{block}
\end{blockarray}
\,.
\]
On the other hand, another flag algebra argument yields that the left-hand side of~\eqref{eq:flag} is equal to
{\[
 3 \cdot F_1 +9 \cdot F_3 +3 \cdot F_4 -\frac{17}{5} \cdot F_5 +18 \cdot F_6 -\frac{34}{5} \cdot F_7 -\frac{49}{5} \cdot F_8 +12 \cdot F_9 -\frac{4}{5} \cdot F_{10} -32 \cdot F_{11} +27 \cdot F_{12} .
\]}
A tedious yet straightforward calculation reveals the following coordinate-wise inequality
{\[
\left(\frac1{33}, \frac{12}{209}, \frac{3}{1147}, \frac{231}{163}, \frac{17}{84},\frac{12}{293}   \right) \cdot M
<
\left( 3, 0, 9, 3, -\frac{17}{5},18, -\frac{34}{5}, -\frac{49}{5}, 12, -\frac{4}{5}, -32 , 27  \right),
\] }
which in turn shows that \eqref{eq:flag} asymptotically holds in the theory of triangle-free graphs.
\end{proof}

The flag algebra calculations used in the proof of Lemma~\ref{l:flag} can be independently verified by a SAGE script,
which is available as an ancillary file of the arXiv version of this manuscript.

\section{Concluding remarks}\label{sec:outro}
As we have already mentioned in the introduction, a significantly larger flag algebra computation than the one used in our proof yields that $q_n(G) < \nikifconst n$ for every triangle-free $n$-vertex graph.
Similarly, assuming that $G$ is regular allows us to show $\lambda_1(G)+\lambda_n(G) < \brandtconst n$.
On the other hand, our method will be able to get neither of the coefficients below $42/275 = 0.15\overline{{}27}$.

Indeed, consider the Higman-Sims graph $G_{HS}$. It is edge-transitive  so $m_{xy} = 21 \cdot 6 + 22$ for every $xy \in E(G_{HS})$, and $w_x=22^2$ for every $x \in V(G)$, where $m_{xy}$ and $w_x$ are defined as before Lemma~\ref{l:rayleigh}. 
Therefore, \[\frac{w_x +w_y - 2m_{xy}}{(\deg(x)+\deg(y)) \cdot |V(G_{HS})|} = \frac{2(22 ^2 - 21 \cdot 6 - 22)}{2 \cdot 22 \cdot 100} = \frac{42}{275},\] 
for every  $xy \in E(G_{HS})$, and so Lemma~\ref{l:rayleigh} only yields $q_n(G_{HS}) \leq \frac{42}{275} \cdot |V(G_{HS})|$.
However, we have $q_n(G_{HS}) = \lambda_1(G_{HS})+\lambda_n(G_{HS}) = 0.14 \cdot |V(G_{HS})|$. It might be that $q_n(G) \le 0.14 n$ holds for every triangle-free graph $G$ on $n$ vertices.

\section*{Acknowledgements}
We thank anonymous referees for carefully reading the manuscript.

\bibliographystyle{abbrvurl}
\bibliography{combined}

\end{document}